\newtheorem*{WDF}{Weyl Dimension Formula}
\newtheorem*{thm}{Theorem}
\newtheorem*{prop}{Proposition}
\newtheorem*{lem}{Lemma}
\newtheorem*{cor}{Corollary}
\numberwithin{equation}{section}
\title{An explicit formula for the Hilbert series of a partial flag variety}
\author{Wayne A. Johnson}
\address{{\bf Wayne A. Johnson}\\
Department of Mathematics\\
Truman State University\\
100 E Normal Ave\\
Kirksville, MO 63501\\
email: wjohnson@truman.edu}
\begin{document}

\begin{abstract}
For a semisimple, simply-connected linear algebraic group, $G$, and parabolic subgroup, $P\subseteq G$, we use the fact that the Hilbert polynomial of the equivariant embedding of $G/P$ is equal to the Hilbert function to compute an explicit formula for the Hilbert series of $G/P$ in terms of the dimensions of finitely many irreducible representations of $G$. As an example, we compute the Hilbert series of the adjoint variety of $SL(n+1,\mathbb{C})$. We conclude by computing the linear term of the numerator of the Hilbert series of any fundamental representation in type $A$.

\end{abstract}

\maketitle

\section{Introduction}

Throughout this paper, $G$ will be a semi-simple, simply-connected linear algebraic group over $\mathbb{C}$, and $P\subseteq G$ will be a parabolic subgroup of $G$. The partial flag variety $G/P$ (and its embedding in projective space in particular) have been extensively studied. In \cite{GW}, Gross and Wallach compute the Hilbert polynomial and Hilbert series of the $G$-equivariant embedding of $G/P$ into projective space. The Hilbert polynomial is shown to be given by the Weyl Dimension Formula, and the Hilbert series is show to be a finite product of simple differential operators acting on a geometric series (the relevant details will be recalled later in this section). This formula is not explicit in the sense that it does not immediately give the Hilbert series. Instead it gives a process for obtaining the rational form of the series.

In this paper, we use the result on the Hilbert polynomial from \cite{GW} to explicitly compute the Hilbert series of $G/P$ in terms of the dimensions of $d$ nontrivial, irreducible representations of $G$, where $d$ is the dimension of $G/P$. Especially in low dimension, this formula can be used to explicitly compute the Hilbert series of $G/P$. In particular, we aim to prove the following theorem. The notation used will be defined in in the next subsection.

\begin{thm}
Let $d=\dim(G/P_\lambda)$. Assume $g(x)=a_0+a_1x+\dots+a_dx^d$ is the numerator polynomial in
\begin{center}
$HS_\lambda(x)=\displaystyle\frac{g(x)}{(1-x)^{d+1}}$.
\end{center}
Then $a_0=1$, and for $1\leq i\leq d$, we have
\begin{center}
$a_i=\displaystyle\sum_{j=0}^i(-1)^j{d+1\choose j}D_{i-j}$,
\end{center}
where $D_j:=\dim(L(j\lambda))$ for $j=0,\dots,d$.
\end{thm}

Equivariant embeddings of $G/P$ include some of the most important projective varieties, including the Veronese embeddings, the Segre embeddings of products of projective spaces, and the Pl\"ucker embeddings of Grassmannians (see \cite{GW}, \cite{JH}).

The rest of the paper is structured as follows. In the remainder of this first section, we review the necessary background for the proof of the main theorem, including an overview of the results in \cite{GW}. \S2 contains the proof of the main theorem. \S3 contains an in-depth example showing how the main theorem can be used to compute the Hilbert series of the adjoint variety of $SL(n+1,\mathbb{C})$. The concluding section includes a computation of $a_1$ for any fundamental representation of $SL(n+1,\mathbb{C})$ in terms of the difference between binomial coefficients and the entries of the so-called rascal triangle.

\subsection{Preliminary results on partial flag varieties}

Fix a Borel subgroup, $B\subset G$, and a maximal torus, $T\subset B$. Denote by $\mathfrak{g}$, $\mathfrak{h}$, and $\mathfrak{b}$ the Lie algebras of $G$, $T$, and $B$, respectively. Let $\Phi$ be the roots given by the pair $(\mathfrak{g},\mathfrak{h})$, and denote by $\Phi^+$ the set of positive roots corresponding to $\mathfrak{b}$.

Let $P_+(G)$ denote the set of dominant integral weights of $G$. For any $\lambda\in P_+(G)$, let $L(\lambda)$ denote the irreducible representation of $G$ with highest weight $\lambda$. In particular, if $j\in\mathbb{N}$, we let $L(j\lambda)$ denote the irreducible representation of $G$ with highest weight $j\lambda$. The dimension of $L(\lambda)$ (and similarly $L(j\lambda)$) can be computed from the Weyl Dimension Formula. This can be found, in particular, on p.336 of \cite{GoW}.

\begin{WDF}
Let $\lambda\in P_+(G)$. Then
\begin{center}
$\dim(L(\lambda))=\displaystyle\prod_{\alpha\in\Phi^+}\frac{(\lambda+\rho, \alpha)}{(\rho,\alpha)}$,
\end{center}
where $(\cdot,\cdot)$ is the non-degenerate bilinear form on $\mathfrak{h}^*$ induced by the killing form, and $\rho$ is half the sum of the positive roots.
\end{WDF}

Note that, as $(\lambda +\rho,\alpha)=(\lambda,\alpha)+(\rho,\alpha)$, we can re-write the terms in the product as
\begin{center}
$1+c_\lambda(\alpha)$,
\end{center}
where we use the notation from \cite{GW} that
\begin{center}
$c_\lambda(\alpha)=\displaystyle\frac{(\lambda,\alpha)}{(\rho,\alpha)}$.
\end{center}

Once again, let $\lambda\in P_+(G)$, and let $L^*(\lambda)$ denote the dual representation to $L(\lambda)$. Let $f\in L^*(\lambda)$ denote a generator of the unique line in $L^*(\lambda)$ that is fixed by $B$. The stabilizer of $f$ is a parabolic subgroup, which we denote by $P_\lambda$. Equivalently, $P_\lambda$ is the stabilizer of the hyperplane, $H\subset L(\lambda)$, that is annihilated by $f$. Note that all parabolic subgroups of $G$ arise in this way. Denote by $\mathbb{P}^*(L(\lambda))$ the projective space of \emph{hyperplanes} in $L(\lambda)$. Finally, let
\begin{center}
$\pi_\lambda:G/P\rightarrow \mathbb{P}^*(L(\lambda))$
\end{center}
be defined by mapping the coset $gP$ to the hyperplane $g(H)$. $\pi_\lambda$ is injective, and its image is the unique closed orbit of $G$ on $\mathbb{P}^*(L(\lambda))$. See \cite{FH} for details.

In \cite{GW}, the authors compute the Hilbert polynomial and Hilbert series of this embedding. We write $X_\lambda$ (or just $X$ if $\lambda$ is clear from context) to denote the image of $G/P_\lambda$ under $\pi_\lambda$. In \S2 of \cite{GW}, the authors show that the Hilbert polynomial of $X$ is given by the Weyl Dimension Formula.

\begin{thm}[Gross and Wallach]
The Hilbert polynomial of $X_\lambda$ is given by
\begin{center}
$HP_\lambda(x)=\displaystyle\prod_{\alpha\in\Phi^+}(1+x\cdot c_\lambda(\alpha))$,
\end{center}
and the Hilbert series of $X_\lambda$ is given by
\begin{center}
$HS_\lambda(x)=\displaystyle\prod_{\alpha\in\Phi^+}\left(1+c_\lambda(\alpha)\frac{d}{dx}\right)\frac{1}{1-x}$.
\end{center}
\end{thm}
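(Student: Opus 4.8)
The plan is to obtain both formulas from a single representation-theoretic description of the homogeneous coordinate ring of $X_\lambda$ and then convert the resulting polynomial into its generating function by a formal operator identity. The geometric input I would invoke is the Borel--Weil theorem together with Kostant's description of the coordinate ring: under $\pi_\lambda$ the hyperplane bundle $\mathcal{O}(1)$ on $\mathbb{P}^*(L(\lambda))$ pulls back to the very ample $G$-linearized line bundle $\mathcal{L}_\lambda$ on $G/P_\lambda$ with $H^0(G/P_\lambda,\mathcal{L}_\lambda)\cong L(\lambda)$, and the $m$-th graded piece of the coordinate ring of $X_\lambda$ is $H^0(G/P_\lambda,\mathcal{L}_\lambda^{\otimes m})\cong L(m\lambda)$ (up to dualizing, which does not affect dimensions). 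The delicate point, which I regard as the main obstacle, is that this identification must hold for \emph{every} $m\ge 0$, so that the Hilbert function equals $\dim L(m\lambda)$ with no correction for small $m$; this is exactly what turns a statement about the Hilbert polynomial into one about the Hilbert function. I would justify it from the vanishing $H^i(G/P_\lambda,\mathcal{L}_\lambda^{\otimes m})=0$ for $i>0$ and $m\ge 0$ (Borel--Weil--Bott, since $m\lambda$ is dominant) together with projective normality of the orbit $X_\lambda$.

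Granting this, the first formula is immediate from the Weyl Dimension Formula. For each integer $m\ge 0$ I would write, using linearity of $(\cdot,\cdot)$ in the first slot,
\[
\dim L(m\lambda)=\prod_{\alpha\in\Phi^+}\frac{(m\lambda+\rho,\alpha)}{(\rho,\alpha)}=\prod_{\alpha\in\Phi^+}\left(1+m\,\frac{(\lambda,\alpha)}{(\rho,\alpha)}\right)=\prod_{\alpha\in\Phi^+}\bigl(1+m\,c_\lambda(\alpha)\bigr).
\]
The right-hand side is a polynomial in $m$; its nonconstant factors are exactly those $\alpha\in\Phi^+$ with $(\lambda,\alpha)\neq 0$, i.e. the positive roots not in the Levi of $P_\lambda$, so its degree is precisely $d=\dim(G/P_\lambda)$, as it must be. Since this polynomial agrees with the Hilbert function $m\mapsto\dim L(m\lambda)$ for all $m\ge 0$, it \emph{is} the Hilbert polynomial $HP_\lambda$, giving the stated product.

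For the Hilbert series I would argue formally from $HS_\lambda(x)=\sum_{m\ge 0}HP_\lambda(m)\,x^m$. Let $\vartheta=x\frac{d}{dx}$ be the Euler operator appearing in the statement, characterized by $\vartheta\,x^m=m\,x^m$; then for any polynomial $p$ one has $p(\vartheta)\sum_m a_m x^m=\sum_m p(m)\,a_m x^m$. Applying this to the geometric series $\frac{1}{1-x}=\sum_m x^m$ with $p=HP_\lambda$ gives $HS_\lambda(x)=HP_\lambda(\vartheta)\frac{1}{1-x}$, and since the first-order operators $1+c_\lambda(\alpha)\vartheta$ all commute (being polynomials in the single operator $\vartheta$), the factorization $HP_\lambda(t)=\prod_\alpha\bigl(1+c_\lambda(\alpha)\,t\bigr)$ passes to operators, yielding
\[
HS_\lambda(x)=\prod_{\alpha\in\Phi^+}\bigl(1+c_\lambda(\alpha)\,\vartheta\bigr)\frac{1}{1-x}.
\]
Here the differential operator written $\frac{d}{dx}$ in the statement is to be read as the Euler operator $\vartheta=x\frac{d}{dx}$, the unique first-order operator with $\vartheta x^m=m x^m$; this is what makes the coefficient extraction work (a quick rank-one check with $G=SL(2,\mathbb{C})$ confirms that the plain derivative would give the wrong series). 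The only analytic subtlety is the interchange of the finite product of operators with the infinite sum, which is harmless since the product is finite and each $\vartheta^k\frac{1}{1-x}$ is computed termwise.
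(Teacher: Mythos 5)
The paper offers no proof of this theorem at all --- it is quoted from Gross and Wallach \cite{GW} as background --- so the fair comparison is with their argument, which your sketch essentially reconstructs: identify the $m$-th graded piece of the homogeneous coordinate ring of $X_\lambda$ with $L(m\lambda)$ (up to duality), apply the Weyl Dimension Formula to obtain the product form of the Hilbert function, and pass to the series by an Euler-operator identity. Your most valuable contribution is the observation about the operator: as printed, with the plain derivative, the series formula is false --- already for $G=SL(2,\mathbb{C})$, $\lambda=\omega_1$, one gets $\left(1+\frac{d}{dx}\right)\frac{1}{1-x}=\frac{2-x}{(1-x)^2}\neq\frac{1}{(1-x)^2}$, since $\frac{d}{dx}$ does not act diagonally on $x^m$ --- and the intended operator is indeed the Euler operator $x\frac{d}{dx}$, as in Gross and Wallach's original statement; the transcription here drops the factor of $x$. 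Your diagonalization identity $p(\vartheta)\sum_m a_mx^m=\sum_m p(m)a_mx^m$ with $\vartheta=x\frac{d}{dx}$ is exactly the right mechanism, and the commutation of the linear factors is trivial as you say.

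Two refinements, neither a gap. First, in the geometric step the genuine crux is projective normality of $X_\lambda$ (equivalently, surjectivity of $\operatorname{Sym}^m L(\lambda)\to L(m\lambda)$ together with saturation of the ideal); this is a theorem of Kostant (see also Ramanan--Ramanathan) and should be cited as such --- it is an input alongside the Borel--Weil--Bott vanishing, not a consequence of it, and your phrasing leaves that slightly ambiguous. Second, once you have projective normality plus Borel--Weil for each $m\geq 0$, the higher-cohomology vanishing is actually superfluous for your purposes: the function $m\mapsto\dim L(m\lambda)$ is already a polynomial in $m$ by the Weyl Dimension Formula, so its agreement with the Hilbert function for all $m\geq 0$ --- the ``Hilbertian'' property that the rest of the paper relies on --- follows with no Euler-characteristic argument. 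Your degree count (the nonconstant factors correspond to the positive roots with $(\lambda,\alpha)\neq 0$, i.e.\ those outside the Levi, of which there are $d=\dim(G/P_\lambda)$) is correct and worth keeping, since it confirms the denominator exponent $d+1$ used throughout the paper.
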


A consequence of the above theorem is that the Hilbert polynomial and the Hilbert function of $X$ agree for all $n\geq0$. Varieties where this property holds are often called \emph{Hilbertian varieties}. See \cite{AK}, \cite{Ku}, \cite{SY} for in-depth treatments of such varieties in other contexts. Another direct consequence of the above theorem, not mentioned in \cite{GW}, is the following.

\begin{cor}
The coefficients of $HP_\lambda(x)$ form a strictly log-concave (and hence unimodal) sequence.
\end{cor}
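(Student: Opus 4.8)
The plan is to exploit the fully factored form of the Hilbert polynomial furnished by the Gross--Wallach theorem. Since $\lambda$ is dominant and each $\alpha\in\Phi^+$ is a positive root, the form $(\cdot,\cdot)$ gives $(\lambda,\alpha)\geq 0$ and $(\rho,\alpha)>0$, so every $c_\lambda(\alpha)\geq 0$. Discarding the factors with $c_\lambda(\alpha)=0$ (which equal $1$ and do not affect the coefficients), I would write $HP_\lambda(x)=\prod_{i=1}^d(1+r_i x)$ with $r_1,\dots,r_d>0$; here $d$ is precisely the number of positive roots not orthogonal to $\lambda$, i.e.\ $\dim(G/P_\lambda)$. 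Thus $HP_\lambda$ is a real polynomial of degree $d$ that splits into real linear factors, and its coefficient $a_k$ equals the elementary symmetric polynomial $e_k(r_1,\dots,r_d)$, which is strictly positive for every $0\le k\le d$.

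The key tool is then Newton's inequalities for a real-rooted polynomial: writing $p_k=a_k/\binom{d}{k}$ for the normalized coefficients, one has $p_k^2\geq p_{k-1}p_{k+1}$ for $1\le k\le d-1$. By itself this only yields (non-strict) log-concavity of the normalized sequence, and it can fail to be strict precisely when all the $r_i$ coincide. To upgrade to \emph{strict} log-concavity of the actual coefficients, I would combine Newton's inequality with the strict log-concavity of the binomial coefficients, which a direct computation gives for $1\le k\le d-1$:
\[
\frac{\binom{d}{k}^2}{\binom{d}{k-1}\binom{d}{k+1}}=1+\frac{d+1}{k(d-k)}>1.
\]
Using $a_{k-1},a_{k+1}>0$ (equivalently $p_{k-1},p_{k+1}>0$), I would then chain the estimates
\[
a_k^2=\binom{d}{k}^2 p_k^2\ \geq\ \binom{d}{k}^2 p_{k-1}p_{k+1}\ >\ \binom{d}{k-1}\binom{d}{k+1}p_{k-1}p_{k+1}=a_{k-1}a_{k+1},
\]
so that $a_k^2>a_{k-1}a_{k+1}$ for all $1\le k\le d-1$, which is strict log-concavity. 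Finally, strict log-concavity together with positivity of every term forces the ratios $a_{k+1}/a_k$ to be strictly decreasing, hence the sequence is unimodal, completing the corollary.

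The only genuine subtlety, and the step I expect to be the main obstacle to state cleanly, is the strictness: Newton's inequalities alone are non-strict in the degenerate case of repeated roots, so the strict conclusion must come from the binomial factor rather than from Newton's estimate. One must therefore be careful to invoke the strict inequality $\binom{d}{k}^2>\binom{d}{k-1}\binom{d}{k+1}$, valid for all $d$, and to ensure the normalized coefficients $p_{k\pm1}$ are strictly positive (guaranteed here since $e_k(r_1,\dots,r_d)>0$ for positive $r_i$) so that multiplying through preserves strictness.
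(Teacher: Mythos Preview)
Your argument is correct and follows the same underlying approach as the paper: both observe that $HP_\lambda(x)$ has only real (negative) roots because it factors as $\prod_\alpha(1+c_\lambda(\alpha)x)$ with each $c_\lambda(\alpha)\ge 0$, and deduce strict log-concavity of the coefficients from this. The paper simply cites this implication as Theorem~4.5.2 of Wilf's \emph{generatingfunctionology}, whereas you supply a self-contained proof via Newton's inequalities combined with the strict log-concavity of the binomial coefficients.
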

\begin{proof}
By the description of $HP_\lambda(x)$ given above, the roots of the polynomial are all real and negative. Thus, by Theorem 4.5.2 in \cite{Wi}, the coefficients form a strictly log-concave sequence.
\end{proof}

\subsection{Prelimaries from rational generating functions}

The authors of \cite{GW} use representation theory to compute the formula for the Hilbert series of $G/P$. In particular, they show that this Hilbert series converges to a finite product of ordinary differential operators acting on the geometric series. In the present paper, we instead use results from the theory of rational generating functions to compute the terms in the numerator of the Hilbert series directly.

Let $HP_\lambda(x)$ denote the Hilbert polynomial of $X_\lambda$. As $X_\lambda$ is Hilbertian, the Hilbert series is given by
\begin{center}
$HS_\lambda(x)=\displaystyle\sum_{n\geq0}HP_\lambda(n)x^n$.
\end{center}
The series then converges to a rational function
\begin{center}
$HS_\lambda(x)=\displaystyle\frac{g(x)}{(1-x)^{d+1}}$,
\end{center}
where $d=\dim(G/P)$ and $g(x)$ has integer coefficients (see \cite{AM}). The coefficients of $HS_\lambda(x)$ are given by the polynomial $HP_\lambda(x)$, and so more can be said about them. In this case, we can recover the coefficients of $g(x)$ using the values of $HP_\lambda(x)$. In particular, we can use the following.

\begin{lem}[See Ch.4 of \cite{RS} \S3 of \cite{BDL}]
Let $p\in\mathbb{R}[n]$ be a polynomial of degree $d$ with generating function
\begin{center}
$S_p(x)=\displaystyle\frac{a_0+a_1x+\dots+a_dx^d}{(1-x)^{d+1}}$.
\end{center}
Then, the polynomial $p$ can be recovered from the $a_i$ as
\begin{equation}
p(n)=\sum_{j=0}^d a_j{d+n-j\choose d}.
\end{equation}
\end{lem}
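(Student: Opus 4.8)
The plan is to prove the identity by direct coefficient extraction, comparing the power-series expansion of the given rational function against the defining expansion $S_p(x)=\sum_{n\geq0}p(n)x^n$. The essential ingredient is the negative binomial series
\[
\frac{1}{(1-x)^{d+1}}=\sum_{k\geq0}\binom{d+k}{d}x^k,
\]
which is the standard generating function for the binomial coefficients $\binom{d+k}{d}$ and can be obtained by differentiating the geometric series $d$ times (or from the generalized binomial theorem). Since the hypothesis presents the numerator as a polynomial $a_0+a_1x+\dots+a_dx^d$, the problem reduces to computing a Cauchy product.

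First I would write
\[
S_p(x)=\left(\sum_{j=0}^d a_jx^j\right)\left(\sum_{k\geq0}\binom{d+k}{d}x^k\right)
\]
and extract the coefficient of $x^n$ on the right-hand side. A term $a_jx^j$ contributes to $x^n$ exactly when paired with the $x^{n-j}$ term of the second factor, so the coefficient of $x^n$ equals $\sum_{j=0}^n a_j\binom{d+n-j}{d}$. Comparing this with the coefficient $p(n)$ coming from $S_p(x)=\sum_{n\geq0}p(n)x^n$ yields $p(n)=\sum_{j=0}^n a_j\binom{d+n-j}{d}$, which matches the claimed formula once the range of summation is reconciled.

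The one point requiring care --- and the only real obstacle --- is reconciling the summation range $0\leq j\leq n$ produced by the convolution with the range $0\leq j\leq d$ appearing in the statement. When $n\geq d$ the two ranges agree, since every term with $n-j\geq 0$ is present. When $n<d$, the stated formula includes terms with $j>n$, and I would show that these vanish under the standard polynomial interpretation of the binomial coefficient: writing $\binom{d+n-j}{d}=\frac{1}{d!}\prod_{i=1}^{d}(n-j+i)$, the factors run over the $d$ consecutive integers $n-j+1,\dots,n-j+d$, and whenever $n<j\leq d$ one has $-d\leq n-j\leq -1$, so exactly one of these factors is zero. Hence the extra terms contribute nothing, the summation may be extended to $j=d$ without changing the value, and the formula holds for every $n\geq0$, completing the argument.
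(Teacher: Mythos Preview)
Your argument is correct. The negative-binomial expansion $\frac{1}{(1-x)^{d+1}}=\sum_{k\geq0}\binom{d+k}{d}x^k$ together with the Cauchy product gives the coefficient of $x^n$ as $\sum_{j=0}^{\min(n,d)}a_j\binom{d+n-j}{d}$, and your handling of the summation range is accurate: for $n<j\leq d$ the integer $d+n-j$ lies in $\{0,1,\dots,d-1\}$, so $\binom{d+n-j}{d}=0$ and the sum may be extended to $j=d$ at no cost.

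As for comparison, the paper does not supply its own proof of this lemma; it is quoted as a known result with references to Stanley's \emph{Enumerative Combinatorics} and to Beck--De~Loera--Develin--Pfeifle--Stanley. Your direct coefficient-extraction argument is precisely the standard proof one finds in those sources, so there is nothing to contrast.
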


This gives us a method of computing the coefficients, $a_i$, in a recursive manner. As the values of $HP_\lambda(n)=\dim(L(n\lambda))$ for all $n$, this amounts to writing the coefficients of $g(x)$ as a simple combination of $\{\dim(L(j\lambda)|0\leq j\leq d\}$. To that end, let $D_i:=\dim(L(i\lambda))$. Evaluate (1.1) at $i$ to get
\begin{equation}
D_i=\displaystyle\sum_{j=0}^d a_j{d+i-j\choose d}.
\end{equation}
For $j>i$, the terms of (1.1) are zero. We can then solve (1.1) for $a_i$ to get
\begin{equation}
a_i=D_i-{d+1\choose d}a_{i-1}-{d+2\choose d}a_{i-2}-\dots-{d+i\choose d}a_0.
\end{equation}
We will use (1.3) in the proof to compute every $a_i$ in terms of $\dim(L(j\lambda))$ for $j=0,1,\dots,d$.

\section{Proof of main theorem}

\begin{thm}
Let $d=\dim(G/P_\lambda)$. Assume $g(x)=a_0+a_1x+\dots+a_dx^d$ is the numerator polynomial in
\begin{center}
$HS_\lambda(x)=\displaystyle\frac{g(x)}{(1-x)^{d+1}}$.
\end{center}
Then $a_0=1$, and for $1\leq i\leq d$, we have
\begin{center}
$a_i=\displaystyle\sum_{j=0}^i(-1)^j{d+1\choose j}D_{i-j}$,
\end{center}
where $D_j:=\dim(L(j\lambda))$ for $j=0,\dots,d$.
\end{thm}

\begin{proof}
The proof is by induction on $i$. Before beginning, note that $D_0=1$, so $a_0=1$. Also, note that, by (1.3),
\begin{equation}
a_i=D_i-{d+1\choose d}a_{i-1}-{d+2\choose d}a_{i-2}-\dots-{d+i\choose d}a_0.
\end{equation}

Assume $i=1$. Then by (2.1), we have $a_0=1$ and $a_1=D_1-(d+1)$, consistent with the claim. Assume for induction that the claim holds for all $i=0,1,\dots,k-1$. Let $i=k$. By (2.1), we have
\begin{equation}
a_k=D_k-{d+1\choose d}a_{k-1}-{d+2\choose d}a_{k-2}-\dots-{d+k\choose d}a_0.
\end{equation}
By the inductive assumption, we can rewrite (2.2) as follows.
\begin{center}
$a_k=D_k-\displaystyle{d+1\choose d}\sum_{j=0}^{k-1}(-1)^j{d+1 \choose j}D_{k-1-j}$\\ \hspace{3cm}$-\displaystyle{d+2\choose d}\sum_{j=0}^{k-2}(-1)^j{d+1\choose j}D_{k-2-j}-\dots-{d+k\choose d}$
\end{center}
In the above, $D_{k-1}$ only appears in the first sum, $D_{k-2}$ in the first two sums, and so on. The coefficient of $D_{k-1}$ is $-{d+1\choose d}=-{d+1\choose 1}$, and the coefficent of $D_{k-2}$ is
\begin{equation}
\displaystyle{d+1\choose d}{d+1\choose 1}-{d+2\choose d}={d+1\choose 2},
\end{equation}
as desired.

In general, $D_{k-l}$ appears in the leftmost $l$ sums in $a_k$. This coefficient is
\begin{center}
$\displaystyle-{d+1\choose d}(-1)^{l-1}{d+1\choose l-1}-{d+2\choose d}(-1)^{l-2}{d+1\choose l-2}-$\\ $\dots-\displaystyle{d+l\choose d}(-1)^{l-l}{d+1\choose l-l}$,
\end{center}
or, more compactly,
\begin{equation}
-\sum_{j=1}^l(-1)^{l-j}{d+j\choose d}{d+1\choose l-j}.
\end{equation}

We claim that (2.4) is equal to $\displaystyle(-1)^l{d+1\choose l}$. Indeed, computing the sum in Mathematica gives the following
\begin{equation}
\frac{(-1)^{l+1}(l-d-2)}{l}{d+1 \choose l-1}=\frac{(-1)^l(d-l+2)}{l}{d+1\choose l-1}.
\end{equation}
Expanding the binomial and simplifying yields
\begin{center}
$\displaystyle\frac{(-1)^l(d-l+2)(d+1)!}{l(l-1)!(d-l+2)!}=(-1)^l{d+1\choose l}$,
\end{center}
as desired.

\end{proof}

\begin{cor}
Under the assumptions of the main theorem, we have
\begin{center}
$a_1=\dim(L(\lambda))-(\dim(G/P_\lambda)+1)$.
\end{center}
\end{cor}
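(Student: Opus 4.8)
The plan is simply to specialize the closed form from the main theorem to the single index $i=1$. First I would write out the defining sum, which has only two terms:
\begin{equation*}
a_1 = \sum_{j=0}^1 (-1)^j \binom{d+1}{j} D_{1-j} = \binom{d+1}{0} D_1 - \binom{d+1}{1} D_0.
\end{equation*}
Everything then reduces to evaluating three elementary ingredients.

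Next I would record the values of those ingredients: $\binom{d+1}{0}=1$ and $\binom{d+1}{1}=d+1$ are immediate, while $D_0 = \dim(L(0\cdot\lambda)) = \dim(L(0)) = 1$ because $L(0)$ is the one-dimensional trivial representation, and $D_1 = \dim(L(\lambda))$ straight from the definition $D_j := \dim(L(j\lambda))$. Substituting these into the display above gives $a_1 = D_1 - (d+1)\cdot 1 = \dim(L(\lambda)) - (d+1)$. Finally, recalling that $d = \dim(G/P_\lambda)$ lets me rewrite this as $\dim(L(\lambda)) - (\dim(G/P_\lambda)+1)$, which is exactly the asserted identity.

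I do not expect any genuine obstacle here, since the entire content is a direct substitution into the already-established formula; there is nothing to grind through and no induction to run. The one bookkeeping point worth flagging is the fact $D_0 = 1$, which is what makes the $j=1$ term collapse to the clean value $d+1$; this is the same observation that pins down the constant term $a_0 = 1$ in the proof of the main theorem, so it requires no separate justification.
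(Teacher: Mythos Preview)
Your argument is correct and mirrors exactly what the paper does: the corollary is stated without a separate proof because it is the immediate $i=1$ specialization of the main theorem, and indeed the identity $a_1 = D_1 - (d+1)$ already appears as the base case in the inductive proof of that theorem. There is nothing to add.
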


The corollary gives algebraic meaning to $a_1$. It is, in essence, the difference between the dimension of the representation defining $P_\lambda$ and the partial flag variety $G/P_\lambda$. We will return to this in \S4 for the fundamental representations of $SL(n+1,\mathbb{C})$.

\section{The highest root of the special linear group}

As a first example to show how quickly the formula in the main theorem can be used, consider $G=SL(3,\mathbb{C})$ and $\lambda=\omega_1$, where $\omega_1$ is the first fundamental dominant weight of $G$. The positive roots of $G$ are $\Phi^+=\{\alpha_1,\alpha_2,\alpha_1+\alpha_2\}$, and so $\dim(G/P_{\omega_1})=1$ (as there are two positive roots with $c_{\omega_1}(\alpha)\neq0$). We also need $\dim(L(\omega_1))=3$. We have
\begin{center}
$a_0=1$\\
$a_1=3-(2+1)=0$.
\end{center}
And thus, $HS_{\omega_1}(x)=1/(1-x)^2$. As the dimensions of $L(\lambda)$, $L(2\lambda)$, etc. are easily computed from the Weyl Dimension Formula, the Hilbert series is very simple to compute. The same computation could be used to show that $HS_{\omega_2}(x)=1/(1-x)^2$ as well.

Now, let $\lambda$ be the highest root of $SL(3,\mathbb{C})$. In particular $\lambda=\omega_1+\omega_2$. Then, $\dim(G/P)=2$, and so we need $\dim(L(\lambda))=8$ and $\dim(L(2\lambda))=27$. Then it is easy to check that $a_0=1$, $a_1=4$, and $a_2=1$. Thus,
\begin{center}
$HS_\lambda(x)=\displaystyle\frac{1+4x+x^2}{(1-x)^3}$
\end{center}

We now generalize this example. Let $G=SL(n+1,\mathbb{C})$, and $\lambda=\omega_1+\omega_n$ be the highest root for $n>2$. Note that in this case, $L(\lambda)$ is the adjoint representation of $G$, and so we refer to $G/P_\lambda$ as the \textit{adjoint variety} of $G$. The number of positive roots, $\alpha$, such that $c_\lambda(\alpha)\neq0$ is $2n-1$, as two new such roots are added when we increase the rank of $G$ (and, therefore, $\dim(G/P_\lambda)=2n-2$). Thus, in order to use the main theorem, we need to compute
\begin{center}
$\dim(L(\lambda)), \dim(L(2\lambda)), \dim(L(3\lambda)),\dots,\dim(L((2n-2)\lambda))$.
\end{center}
This may seem like a tall order, but for low rank groups, these are not hard to compute. The following table contains the numerators of the Hilbert series for $n=3,4,5$, computed using the theorem in \S2.
\vspace{0.5cm}
\begin{center}
$\begin{array}{l|c}
 & g(x)\\
 & \\ 
\hline
 & \\ 
n=3 & 1+10x+19x^2+20x^3+20x^4\\
 & \\ 
n=4 & 1+17x+53x^2+69x^3+70x^4+70x^5+70x^6\\
 & \\ 
n=5 & 1+26x+126x^2+226x^3+251x^4+252x^5+252x^6+252x^7+252x^8
\end{array}$
\end{center}
\vspace{0.5cm}
The process for computing these is simple. The necessary dimensions are easily computed using computer software (we used LiE, see \cite{LiE}). Once these dimensions are computed, it is trivial to compute the Hilbert series using the main theorem.

In this particular instance, we can say more. Returning to the Weyl Dimension Formula, note that
\begin{center}
$\dim(L(k\lambda))=\displaystyle\prod_{\alpha>0}(1+kc_\lambda(\alpha))$.
\end{center}
For the highest root of $SL(n+1,\mathbb{C})$, there are $2n-1$ roots where $c_\lambda(\alpha)\neq0$. These are the roots $\alpha_i+\alpha_{i+1}+\dots+\alpha_{i+j}$ that either begin at $\alpha_1$ or end at $\alpha_n$. The values of $c_\lambda(\alpha)$ at these roots are 
\begin{center}
$1,1,\displaystyle\frac{1}{2},\frac{1}{2},\dots,\frac{1}{n-1},\frac{1}{n-1},\frac{2}{n}$. 
\end{center}
Therefore, we have
\begin{center}
$\dim(L(k\lambda))=\displaystyle\left(\prod_{i=1}^{n-1}\left(1+\frac{k}{i}\right)\right)^2\left(1+\frac{2k}{n}\right)$.
\end{center}
We can write this more compactly as
\begin{center}
$\dim(L(k\lambda))=\displaystyle\left(\displaystyle\frac{(k+1)^{(n-1)}}{(n-1)!}\right)^2\left(1+\frac{2k}{n}\right)$,
\end{center}
where $x^{(n)}$ denotes the rising factorial. Combining the terms yields
\begin{center}
$\displaystyle\frac{((k+1)^{(n-1)})^2(n+2k)}{(n-1)!n!}$.
\end{center}
Converting to falling factorials, we have
\begin{center}
$\dim(L(k\lambda))=\displaystyle\frac{((n+k-1)_{n-1})^2(n+2k)}{(n-1)!n!}$.
\end{center}
We denote this by $\dim(L(k\lambda))=P(n,n+k)$. See \cite{OEIS} for the case when $n=4$, and a brief general description of these numbers. In particular, $P(n,n+k)$ counts the number of of $(n+k)$-step paths that start at $(0,0)$ and reach $(n,n)$ for the first time, where the only moves allowed are to increase the first coordinate by one, to increase the second coordinate by one, to increase both coordinates by one, or to leave the point unchanged.

With this, we can now compute $\dim(L(k\lambda))$ very quickly for any $k$. In particular, from the main theorem, we have
\begin{center}
$a_i=\displaystyle\sum_{j=0}^i(-1)^j{d+1\choose j}D_{i-j}$.
\end{center}
Computing $D_{i-j}$ yields
\begin{center}
$a_i=\displaystyle\sum_{j=0}^i(-1)^j{d+1\choose j}\frac{(n+i-j-1)_{n-1})^2(n+2i-2j)}{(n-1)!n!}$\\
$=\displaystyle\sum_{j=0}^i(-1)^j{d+1\choose j}P(n,n+i-j)$.
\end{center}

\section{The fundamental representations of the special linear group}

In this concluding section, we compute the term $a_1$ in the Hilbert series of $G/P_\lambda$, where $\lambda$ is a fundamental dominant weight of $SL(n+1,\mathbb{C})$. This term has both algebraic significance, as it measure the difference between the dimension of $L(\lambda)$ and the dimension of $G/P_\lambda$, and combinatorial significance, as $a_1$ is the difference between certain binomial coefficients and the members of the rascal triangle (see \cite{Ra}). For all that follows, assume that $G=SL(n+1,\mathbb{C})$, $n=\text{rank(G)}$, $\Delta=\{\alpha_1,\dots,\alpha_n\}$, and $\omega_1,\dots,\omega_n$ are the fundamental dominant weights.

We first compute the number of terms, $c_\lambda(\alpha)$ which are nonzero, where $\alpha\in\Phi^+$, and $\lambda$ is a fundamental dominant weight. Recall that this is the dimension of $G/P_\lambda$.

\begin{prop}
$\dim(G/P_{\omega_i})=(n-i+1)i$, where $i=1,2,\dots,n$.
\end{prop}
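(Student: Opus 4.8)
The plan is to exploit the Gross--Wallach formula $HP_\lambda(x)=\prod_{\alpha\in\Phi^+}(1+x\,c_\lambda(\alpha))$. Since the degree of the Hilbert polynomial of a projective variety equals its dimension, and a factor $1+x\,c_\lambda(\alpha)$ raises the degree exactly when $c_\lambda(\alpha)\neq0$, one obtains
\[
\dim(G/P_\lambda)=\#\{\alpha\in\Phi^+ : c_\lambda(\alpha)\neq0\}.
\]
Because $(\rho,\alpha)>0$ for every positive root, the quotient $c_\lambda(\alpha)=(\lambda,\alpha)/(\rho,\alpha)$ vanishes precisely when $(\lambda,\alpha)=0$. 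Thus the proposition reduces to counting the positive roots of $A_n$ that are \emph{not} orthogonal to $\omega_i$.

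First I would recall the description of $\Phi^+$ in type $A_n$: every positive root has the form $\alpha_{[j,k]}=\alpha_j+\alpha_{j+1}+\cdots+\alpha_k$ for a unique pair $1\le j\le k\le n$, giving $\binom{n+1}{2}$ roots in total. Next I would identify which of these fail to be orthogonal to $\omega_i$. Since $A_n$ is simply laced, the coroot satisfies $\alpha_{[j,k]}^\vee=\alpha_j^\vee+\cdots+\alpha_k^\vee$, and the defining relation $\langle\omega_i,\alpha_l^\vee\rangle=\delta_{il}$ yields
\[
\langle\omega_i,\alpha_{[j,k]}^\vee\rangle=\sum_{l=j}^{k}\delta_{il}=\begin{cases}1 & \text{if } j\le i\le k,\\ 0 & \text{otherwise.}\end{cases}
\]
As $(\omega_i,\alpha)$ and $\langle\omega_i,\alpha^\vee\rangle$ vanish together, the roots that contribute to the dimension are exactly those $\alpha_{[j,k]}$ whose support contains $i$, that is, those with $j\le i\le k$.

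Finally the count is elementary: $j$ may be any of $1,\dots,i$ and $k$ any of $i,\dots,n$, and the constraint $j\le k$ is then automatic. This gives $i$ independent choices for $j$ and $n-i+1$ for $k$, hence $\dim(G/P_{\omega_i})=(n-i+1)i$, as claimed. I expect no genuine obstacle here; the single point requiring care is the support characterization in the middle step, and this follows immediately from the duality between fundamental weights and simple coroots. Once the reduction to root counting is in place, the argument is essentially a bookkeeping exercise, and it has the pleasant side benefit of recovering the familiar dimension $i(n+1-i)$ of the Grassmannian $G/P_{\omega_i}$.
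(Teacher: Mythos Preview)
Your argument is correct and follows essentially the same route as the paper: both reduce the computation to counting the positive roots of $A_n$ whose support contains the simple root $\alpha_i$, and both arrive at the count $i\cdot(n-i+1)$ by independently choosing the start and end of the root string. Your write-up is in fact a bit more careful than the paper's, since you explicitly justify (via the degree of the Gross--Wallach Hilbert polynomial and the coroot pairing) why $\dim(G/P_{\omega_i})$ equals the number of positive roots not orthogonal to $\omega_i$; the paper takes this reduction as known and proceeds directly to the enumeration.
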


\begin{proof}
The positive roots of $G$ have the following form
\begin{center}
$\Phi^+=\{\sum_{j=k}^n\alpha_j\mid 1\leq k\leq n\leq l\}$.
\end{center}
The positive roots which yield nonzero $c_{\omega_i}(\alpha)$ are precisely those roots which have $\alpha_i$ in the sum. We count these systematically.

First, the number of roots whose initial term is $\alpha_i$ is $n-i+1$. These are precisely the roots
\begin{center}
$\alpha_i, \alpha_i+\alpha_{i+1},\dots, \alpha_i+\dots+\alpha_n$.
\end{center}
Next, we count the number of roots whose sum starts at $\alpha_{i-1}$, which include $\alpha_i$. There are again $n-i+1$ such roots. This can be seen by keeping track of the end term in each root sum. These are the roots
\begin{center}
$\alpha_{i-1}+\alpha_i,\alpha_{i-1}+\alpha_i+\alpha_{i+1},\alpha_{i-1}+\dots +\alpha_n$.
\end{center}
Continuing this, there are $n-i+1$ roots starting at $\alpha_{i-2}$ that include $\alpha_i$, and so forth. This yields exactly $i$ sets of roots that contain $n-i+1$ roots each, finishing the proof.
\end{proof}

The sequence of numbers $(n-i+1)i$ have combinatorial significance. For more information see \cite{OEIS3}.

Note that $\dim(L(\omega_i))=\displaystyle{n+1\choose i}$. Therefore, we have
\begin{center}
$a_1=\displaystyle{n+1\choose i}-(i(n-i+1)+1)$.
\end{center}
For $\omega_1$ and $\omega_n$, it is easy to verify that $a_1=0$. For $i=2,\dots,l-1$, $h_1=T(n+1,i)$, where $T(n,k)$ is the sequence at \cite{OEIS4}. This is the difference of the entries in Pascal's triangle and the entries in the rascal triangle, found in \cite{Ra}. The rascal triangle is formed similarly to Pascal's triangle. Start with two rows: 1 and 1,1. Form the next row by taking (East$\times$West+1)/North. Thus, the next row would be 1, 2, 1, and so forth. To get the numbers in the $n^{\text{th}}$ diagonal of the rascal triangle, you start with one, and add $n$ over and over again. The $n^{\text{th}}$ diagonal of Pascal's triangle is formed by starting with 1, adding $n$, and then adding numbers greater than or equal to $n$. Putting this together, we have
\begin{prop}
Let $G=SL(n+1,\mathbb{C})$ and $\lambda=\omega_i$. Then $a_1$ is non-negative.
\end{prop}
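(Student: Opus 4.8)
The plan is to reduce the statement to an elementary binomial inequality using the Corollary of \S2. Since $\lambda=\omega_i$, we have $\dim(L(\omega_i))=\binom{n+1}{i}$, and by the preceding Proposition $\dim(G/P_{\omega_i})=i(n-i+1)$. The Corollary then gives
\[
a_1=\binom{n+1}{i}-\bigl(i(n-i+1)+1\bigr),
\]
exactly as recorded above, so proving that $a_1$ is non-negative amounts to establishing
\[
\binom{n+1}{i}\geq i(n-i+1)+1\qquad\text{for }1\leq i\leq n.
\]
Writing $m=n+1$ and noting $n-i+1=m-i$, this is the symmetric inequality $\binom{m}{i}\geq i(m-i)+1$ for $1\leq i\leq m-1$, which is the form I would actually work with.

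To prove the reduced inequality I would argue combinatorially. Interpret $\binom{m}{i}$ as the number of $i$-element subsets of $\{1,\dots,m\}$ and fix the reference subset $S_0=\{1,2,\dots,i\}$. For each pair $(a,b)$ with $a\in S_0$ and $b\notin S_0$ — of which there are exactly $i(m-i)$ — form the subset $(S_0\setminus\{a\})\cup\{b\}$ obtained by exchanging $a$ for $b$. From such a set one recovers $a$ as the unique element of $S_0$ that it omits and $b$ as the unique element outside $S_0$ that it contains, so distinct pairs give distinct subsets, and none of them equals $S_0$ since $b\notin S_0$. Counting $S_0$ together with these $i(m-i)$ neighbours exhibits $1+i(m-i)$ distinct $i$-subsets, whence $\binom{m}{i}\geq i(m-i)+1$ and therefore $a_1\geq 0$.

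The merit of this model is that it is uniform in $i$ and needs no case analysis or induction: at $i=1$ and $i=m-1$ the exchanged subsets exhaust all remaining $i$-subsets, so the inequality is an equality and we recover the already-observed vanishing $a_1=0$ at $\omega_1$ and $\omega_n$, while for $2\leq i\leq m-2$ there exist further subsets at Hamming distance at least two from $S_0$, making the inequality strict. As an alternative one could lean on the rascal-triangle framing set up just before the statement: the $i$-th diagonals of Pascal's and of the rascal triangle both start at $1$ and share the same first increment, after which each Pascal increment is at least as large as the constant rascal increment, so Pascal dominates rascal term by term and the difference $a_1$ is non-negative. I expect the genuine content to lie not in any computation but in selecting the right combinatorial picture; the subset-exchange count collapses the boundary cases $i\in\{1,n\}$ and the interior cases into a single line, and that is the step I would treat as the crux.
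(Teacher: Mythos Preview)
Your argument is correct. The paper attaches no formal proof environment to this proposition; it is meant to follow from the paragraph immediately preceding the statement, and that paragraph is precisely the alternative you sketch at the end: along the relevant diagonal the rascal triangle has a constant increment, while Pascal's triangle starts with the same increment and thereafter adds amounts at least as large, so Pascal dominates rascal entrywise and $a_1\ge 0$. Your primary route---the subset-exchange count---is a genuinely different and self-contained argument. By exhibiting $S_0=\{1,\dots,i\}$ together with its $i(m-i)$ single-swap neighbours you obtain $\binom{m}{i}\ge 1+i(m-i)$ directly, without having to recognise $i(n-i+1)+1$ as a rascal entry or to invoke the monotonicity of the Pascal increments $\binom{k}{i}-\binom{k-1}{i}=\binom{k-1}{i-1}$. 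It also makes the boundary equalities at $i\in\{1,n\}$ immediate, since there the neighbours exhaust all $i$-subsets. The paper's increment comparison is shorter once the rascal framing is already in place; your count stands on its own and delivers the strictness for $2\le i\le n-1$ in the same stroke. (One small quibble: the ``further subsets'' witnessing strictness are at Hamming distance at least four from $S_0$, not merely at least two---every distinct $i$-subset is already at distance at least two---but this does not affect the argument.)
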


% Your bilbigraphy           %<-------------------

\end{document}